\newtheorem{theorem}{Theorem}[section]
\newtheorem{proposition}[theorem]{Proposition}
\newdefinition{definition}[theorem]{Definition}
\newdefinition{example}[theorem]{Example}
\newproof{proof}{Proof}
\def\ps@pprintTitle{%
 \let\@oddhead\@empty
 \let\@evenhead\@empty
 \def\@oddfoot{}%
 \let\@evenfoot\@oddfoot}
\begin{document}

\begin{frontmatter}

\title{A new characterization of the discrete Sugeno integral\footnote{Preprint of an article published by Elsevier in the Information Fusion 29 (2016), 84-86. It is available online at: \newline www.sciencedirect.com/science/article/pii/S1566253515000810}}

\author[up]{Radom\'ir Hala\v{s}}
\ead{radomir.halas@upol.cz}

\author[stu,ost]{Radko Mesiar}
\ead{radko.mesiar@stuba.sk}

\author[up,sav]{Jozef P\'ocs}
\ead{pocs@saske.sk}

\address[up]{Department of Algebra and Geometry, Faculty of Science, Palack\'y University Olomouc, 17. listopadu 12, 771 46 Olomouc, Czech Republic}
\address[stu]{Department of Mathematics and Descriptive Geometry, Faculty of Civil Engineering, Slovak University of Technology in Bratislava, Radlinsk\'eho 11, 810 05 Bratislava 1, Slovakia}
\address[ost]{University of Ostrava, Institute for Research and Applications of Fuzzy Modeling, NSC Centre of Excellence IT4Innovations, 30. dubna 22, 701 03 Ostrava, Czech Republic}
\address[sav]{Mathematical Institute, Slovak Academy of Sciences, Gre\v s\'akova 6, 040 01 Ko\v sice, Slovakia}

\begin{abstract}
We introduce a new property of the discrete Sugeno integrals which can be seen as their characterization, too. This property, compatibility with respect to congruences on $[0,1]$, stresses the importance of the Sugeno integrals in multicriteria decision support as well.
\end{abstract}

\begin{keyword}
discrete Sugeno integral\sep (compatible) aggregation function\sep lattice \sep congruence\sep scale invariance
%\MSC 
\end{keyword}

\end{frontmatter}

\section{Introduction and preliminaries}\label{sec:1}

Various integral-based operators have become a powerful tool in several applications, mainly in decision theory cf. \cite{Fus1,D3,D1,D2,Fus2,Fus3,Fus4}.
One of the important operators is represented by Sugeno integral, introduced in 1974 by Sugeno \cite{Sug74}, which has become a powerful tool in several applications rather soon \cite{D3,D1,D2}.

Among several properties of the Sugeno integrals, the crucial role is played by the fact that they are aggregation functions. Recall that a mapping $A\colon [0,1]^n\to [0,1]$ is called an aggregation function whenever it is nondecreasing in each coordinate, and satisfies two boundary conditions $A(\mathbf{0})=A(0,\dots,0)=0$ and $A(\mathbf{1})=A(1,\dots,1)=1$. For more details concerning aggregation functions we recommend the monographs \cite{BPC} and \cite{Grabisch et al 2009}. 

For $n\in\mathbb{N}$, denote $N=\{1,\dots,n\}$. The set function $m\colon 2^N\to [0,1]$ satisfying $m(\emptyset)=0$, $m(N)=1$ and being monotone, i.e., $m(E_1)\leq m(E_2)$ whenever $E_1\subseteq E_2\subseteq N$, is called a capacity (in some sources, e.g. in \cite{Sug74}, $m$ is called a fuzzy measure). The discrete Sugeno integral $\mathsf{Su}_m\colon [0,1]^n\to [0,1]$ with respect to a capacity $m$ is given by 
%\begin{equation}\label{eq1}
%\mathsf{Su}_m(\mathbf{x})=\bigvee_{i=1}^{n}\big( t\wedge m(\{i\in N\mid x_i\geq t\})\big). 
%\end{equation}

\begin{equation}\label{eq1}
\mathsf{Su}_m(\mathbf{x})=\bigvee_{t\in[0,1]}\big( t\wedge m(\{i\in N\mid x_i\geq t\})\big). 
\end{equation}

Two equivalent expressions defining the Sugeno integral $\mathsf{Su}_m$ are

\begin{equation}\label{eq2}
\mathsf{Su}_m(\mathbf{x})=\bigvee_{\emptyset\neq I\subseteq N}\big( m(I)\wedge (\bigwedge_{i\in I}x_i)\big)
\end{equation}
and
\begin{equation}\label{eq3}
\mathsf{Su}_m(\mathbf{x})=\bigvee_{i=1}^{n}\big( x_{(i)}\wedge m(\{(i),\dots,(n)\})\big), 
\end{equation}
where $(\cdot)\colon N\to N$ is a permutation such that $x_{(1)}\leq\ldots\leq x_{(n)}$. Recall that, for any capacity $m$, the Sugeno integral is

\begin{itemize}
\item[--] idempotent, $\mathsf{Su}_m(c,\dots,c)=c$ for any $c\in [0,1]$;
\item[--] comonotone maxitive, $\mathsf{Su}_m(\mathbf{x}\vee \mathbf{y})=\mathsf{Su}_m(\mathbf{x})\vee \mathsf{Su}_m(\mathbf{y})$ whenever $\mathbf{x}$ and $\mathbf{y}$ are comonotone, i.e., $(x_i-x_j)(y_i-y_j)\geq 0$ for any $i,j\in N$;
\item[--] min-homogeneous, $\mathsf{Su}_m(\mathbf{x}\wedge (c,\dots,c))=\mathsf{Su}_m(\mathbf{x})\wedge c$ for any $c\in [0,1]$. 
\end{itemize}

Several other properties of $\mathsf{Su}_m$ can be found in \cite{Wang} or in \cite{Couceiro}. Moreover, we have several kinds of axiomatization of the Sugeno integrals \cite{Benvenuti,CM2009LINZ,Couceiro,Marichal2000}. For example, the next claims are equivalent for an aggregation function $A\colon [0,1]^n\to [0,1]$.
\begin{itemize}
\item[i)] $A$ is comonotone maxitive and min-homogeneous;
\item[ii)] $A$ is a Sugeno integral, i.e., there is a capacity $m$ so that $A=\mathsf{Su}_m$ (observe that $m$ is given by $m(E)=A(1_E)$, where $1_E$ is the characteristic function of the set $E$, $1_E(i)=1$ if $i\in E$ and $1_E(i)=0$ otherwise);
\item[iii)] $A$ is median decomposable, i.e., for any $k\in N$, $\mathbf{x}\in [0,1]^n$ it holds $A(\mathbf{x})=\mathrm{med}\big(A(\mathbf{x}_k),x_k,A(\mathbf{x}^k)\big)$, where $(\mathbf{x}_k)_i=(\mathbf{x}^k)_i=x_i$ for $i\neq k$ and $(\mathbf{x}_k)_k=0$, $(\mathbf{x}^k)_k=1$.
\end{itemize}

Due to \cite{Couceiro} it is known that Sugeno integrals are just weighted lattice polynomials idempotent in $0$ and $1$. This fact, considering the real unit interval as a lattice $([0,1],\wedge,\vee)$, has inspired us to look for some other lattice techniques bringing new views on the Sugeno integrals. Namely, we have looked on lattice congruences and compatible aggregation functions, i.e., aggregation functions preserving each lattice congruence. Surprisingly, a new characterization of the Sugeno integrals was obtained. Moreover, the applicability of the Sugeno integral in multicriteria decision support was better clarified. 

This short note is organized as follows. In the next section, compatible $n$-ary aggregation functions are studied and their link to Sugeno integrals is shown. In Section \ref{sec:3}, the impact of our results to multicriteria decision support is discussed. Finally, some concluding remarks are added.

\section{Congruences on $[0,1]$ and compatible aggregation functions}\label{sec:2}

Recall that roughly, a congruence $C$ on an algebraic structure is an equivalence relation preserving the basic operations in the considered structure. The exact definition in case of lattices is as follows:

\begin{definition}
Let $(L,\wedge,\vee)$ be a lattice. A subset $C\subseteq L^2$ is called a congruence on $L$ whenever it is an equivalence relation preserving the join $\vee$ and the meet $\wedge$, i.e., 
\begin{enumerate}[i)]
\item $(a,a)\in C$ for each $a\in L$;
\item if $(a,b)\in C$ then also $(b,a)\in C$;
\item if $(a,b),(b,c)\in C$ then also $(a,c)\in C$;
\item if $(a,b),(c,d)\in C$ then also $(a\vee c,b\vee d)\in C$;
\item if $(a,b),(c,d)\in C$ then also $(a\wedge c,b\wedge d)\in C$.
\end{enumerate}
\end{definition}

For more details concerning lattice congruences and the next result we recommend \cite{G1}. For the sake of selfcontainedness, we prove the following proposition.

\begin{proposition}\label{prop1}
The following are equivalent:
\begin{enumerate}[i)]
\item $C$ is a congruence on $([0,1],\wedge,\vee)$;
\item there is a partition $\{J_k\mid k\in\mathcal{K}\}$ of $[0,1]$ such that $J_k$ is an interval for each $k\in\mathcal{K}$, and $C=\bigcup\limits_{k\in\mathcal{K}}J_k^2$.
\end{enumerate}
\end{proposition}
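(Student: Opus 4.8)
The plan is to prove the two implications separately, using throughout that $([0,1],\wedge,\vee)$ is a chain, so that $a\vee b=\max(a,b)$ and $a\wedge b=\min(a,b)$ for all $a,b\in[0,1]$.

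For the implication ii) $\Rightarrow$ i) I would first observe that if $C=\bigcup_{k\in\mathcal{K}}J_k^2$ for a partition $\{J_k\}$, then $C$ is automatically an equivalence relation whose classes are exactly the blocks $J_k$: reflexivity holds because the $J_k$ cover $[0,1]$, while symmetry and transitivity follow because each $J_k^2$ is a full Cartesian square and distinct blocks are disjoint. It then remains to check compatibility with $\vee$ and $\wedge$. Given $(a,b),(c,d)\in C$, say $a,b\in J_k$ and $c,d\in J_l$, I would distinguish the cases $k=l$ and $k\neq l$. If $k=l$, all four points lie in the single interval $J_k$, which is closed under $\max$ and $\min$ of its elements, so $a\vee c,b\vee d$ and $a\wedge c,b\wedge d$ all stay in $J_k$ and the required pairs are $C$-related. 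The structural fact needed when $k\neq l$ is that two disjoint intervals of a chain are order-separated, i.e. one lies entirely below the other; granting this, say every element of $J_k$ is below every element of $J_l$, one computes $a\vee c=c$, $b\vee d=d$ (both in $J_l$) and $a\wedge c=a$, $b\wedge d=b$ (both in $J_k$), so again the relevant pairs belong to $C$.

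The implication i) $\Rightarrow$ ii) is the conceptual heart, and here the argument is short. Let $\{J_k\mid k\in\mathcal{K}\}$ be the family of equivalence classes of the equivalence relation $C$; then $\{J_k\}$ is a partition of $[0,1]$ and $C=\bigcup_k J_k^2$ by the very definition of equivalence classes, so the only thing to establish is that each class $J_k$ is an interval, i.e. order-convex. To this end I would take $a,c\in J_k$ with $a\leq b\leq c$ and show $b\in J_k$: since $(a,c)\in C$ and $(b,b)\in C$, compatibility with $\vee$ yields $(a\vee b,\,c\vee b)\in C$, and because $a\leq b\leq c$ this reads $(b,c)\in C$. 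Hence $b$ lies in the class of $c$, namely $J_k$, so $J_k$ is convex, and a convex subset of the chain $[0,1]$ is an interval.

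The only point demanding genuine care, and the one I regard as the main obstacle, is the order-separation of disjoint intervals invoked in the case $k\neq l$. I would justify it by contradiction: if neither of two disjoint convex sets lay entirely below the other, one could pick $a,a'$ in the first and $b,b'$ in the second with $b<a$ and $a'<b'$, and then either $a<b'$ forces $a$ into the convex hull of $\{b,b'\}$ or $b'<a$ forces $b'$ into the convex hull of $\{a',a\}$, each contradicting convexity together with disjointness. Everything else reduces to routine manipulation of the lattice operations $\max$ and $\min$ on the chain.
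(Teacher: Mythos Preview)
Your proof is correct and follows essentially the same route as the paper: for i) $\Rightarrow$ ii) both arguments show that each $C$-class is order-convex by combining $(a,c)\in C$ with a reflexive pair and invoking compatibility with $\vee$ (the paper also records the $\wedge$-computation, but this is redundant), and for ii) $\Rightarrow$ i) the paper simply declares it ``a matter of direct checking,'' which is exactly the case analysis you spell out.
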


Let us note that each singleton is also considered as an interval of $[0,1]$.

\begin{proof}

$\mathrm{i)} \Rightarrow \mathrm{ii)}$ 
For any $x\in[0,1]$, denote $J_x=\{y\in[0,1]\mid (x,y)\in C\}$, i.e., $J_x$ is the $C$-equivalence class containing the element $x$. For any $u,v\in J_x$, $u<v$, and any $z\in ]u,v[$, it holds $(u,v),(z,z)\in C$ and thus $(u\vee z, v\vee z)=(z,v)\in C$, and $(u\wedge z,v\wedge z)=(u,z)\in C$. Thus $z\in J_v=J_u=J_x$, i.e., $J_x$ is an interval. Now, it is enough to define $k_x$ as a mid-point of $J_x$, $\mathcal{K}=\{k_x\mid x\in [0,1]\}$ and obviously $C=\bigcup_{k\in\mathcal{K}}J_k^2$, where $\{J_k\mid k\in\mathcal{K}\}$ is an interval-partition of $[0,1]$.

$\mathrm{ii)} \Rightarrow \mathrm{i)}$ is a matter of direct checking only. 
\qed

%Let $J$ be a $C$-equivalence class. We show that $J$ is a convex subset of $[0,1]$, i.e., it is an interval. Assume $u<z<v$ with $u,v\in J$. Then $(u,v)\in C$, $(z,z)\in C$ and since $C$ is join preserving, we obtain $(u\vee z, v\vee z)=(z,v)\in C$ yielding $z\in J$. Obviously, the system $\{J_k\mid k\in\mathcal{K}\}$ of all $C$-equivalence classes forms an interval partition of $[0,1]$ and $C=\bigcup_{k\in\mathcal{K}}J_k^2$.

\end{proof}

Our main interest is to characterize aggregation functions preserving the congruences on $[0,1]$. 

\begin{definition}
An aggregation function $A\colon [0,1]^n\to [0,1]$ is called compatible if it preserves any congruence $C$ of $([0,1],\wedge,\vee)$, i.e., if for any $\mathbf{x},\mathbf{y}\in [0,1]^n$ such that $(x_1,y_1),\dots,(x_n,y_n)\in C$ also $\big(A(\mathbf{x}),A(\mathbf{y})\big)\in C$.
\end{definition}

\begin{theorem}\label{thm1}
Let $A\colon [0,1]^n\to [0,1]$ be an aggregation function. Then $A$ is compatible if and only if it is a Sugeno integral, i.e., if $A=\mathsf{Su}_m$ for some capacity $m$.
\end{theorem}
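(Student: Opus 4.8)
The plan is to prove the two implications separately, with the ``Sugeno integral $\Rightarrow$ compatible'' direction being routine and the converse carrying the real content.

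For the easy direction I would read $\mathsf{Su}_m$ off formula \eqref{eq2} as a lattice term in the variables $x_1,\dots,x_n$ and the constants $m(I)$, built using only $\wedge$ and $\vee$. Fix a congruence $C$ and inputs with $(x_i,y_i)\in C$ for all $i$. Reflexivity gives $(m(I),m(I))\in C$ for every $I$, and repeated use of the meet- and join-compatibility clauses of the definition propagates the relation through each $\bigwedge_{i\in I}x_i$, then through each $m(I)\wedge\bigwedge_{i\in I}x_i$, and finally through the outer join, yielding $(\mathsf{Su}_m(\mathbf{x}),\mathsf{Su}_m(\mathbf{y}))\in C$. Thus every Sugeno integral is compatible.

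For the converse I would not try to construct $m$ directly, but instead verify median decomposability and then invoke the equivalence of conditions ii) and iii). By Proposition \ref{prop1}, every interval partition of $[0,1]$ induces a congruence, and I would use two one-parameter families: for $t\in[0,1]$, let $C_t$ be the congruence whose only nontrivial class is the interval $[t,1]$, and $D_t$ the one whose only nontrivial class is $[0,t]$. Fixing a coordinate $k$ and an input $\mathbf{x}$, I set $t=x_k$, $a=A(\mathbf{x}_k)$ and $b=A(\mathbf{x}^k)$. Since $\mathbf{x}$ and $\mathbf{x}^k$ differ only in the $k$-th coordinate, where $(x_k,1)\in C_t$, compatibility forces $(A(\mathbf{x}),b)\in C_t$; symmetrically $(x_k,0)\in D_t$ forces $(A(\mathbf{x}),a)\in D_t$. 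Monotonicity supplies $a\le A(\mathbf{x})\le b$.

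It then remains to decode these two membership relations. Reading $(A(\mathbf{x}),b)\in C_t$ as ``$A(\mathbf{x})=b$, or else both $A(\mathbf{x}),b\ge t$'' and $(A(\mathbf{x}),a)\in D_t$ as ``$A(\mathbf{x})=a$, or else both $A(\mathbf{x}),a\le t$'', a short case analysis on the position of $t=x_k$ relative to $[a,b]$ yields $A(\mathbf{x})=\mathrm{med}(a,x_k,b)=\mathrm{med}\big(A(\mathbf{x}_k),x_k,A(\mathbf{x}^k)\big)$ in every case, which is exactly median decomposability. I expect the only delicate point to be this final bookkeeping: the interesting case is $a\le x_k\le b$, where one must combine both relations to pin $A(\mathbf{x})$ down to $x_k$, and one must separately check the boundary equalities $x_k=a$ and $x_k=b$ so that no case is lost. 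Once median decomposability is established, the stated equivalence of iii) and ii) identifies $A$ with $\mathsf{Su}_m$ for the capacity $m(E)=A(1_E)$, completing the proof.
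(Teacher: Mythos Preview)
Your proposal is correct and follows essentially the same approach as the paper: the easy direction is identical (propagate the congruence through formula~\eqref{eq2}), and for the hard direction both arguments use the congruences with a single nontrivial block $[0,x_k]$ or $[x_k,1]$ to compare $A(\mathbf{x})$ with $A(\mathbf{x}_k)$ and $A(\mathbf{x}^k)$, thereby establishing median decomposability and invoking the equivalence ii)$\Leftrightarrow$iii). The only cosmetic difference is that the paper splits into cases according to the sign of $x_k-A(\mathbf{x})$, whereas you record both relations first and case-split on the position of $x_k$ in $[a,b]$; your organization is slightly more symmetric but the content is the same.
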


\begin{proof}
Let $A$ be a compatible aggregation function. For $\mathbf{x}\in [0,1]^n$ and $k\in N$, denote $\delta=A(\mathbf{x})$, $\alpha=A(\mathbf{x}_k)$ and $\beta=A(\mathbf{x}^k)$. Due to the monotonicity of $A$, obviously $\alpha\leq\delta\leq\beta$. Suppose that $x_k<\delta$ and consider a partition $\big\{[0,x_k],]x_k,1]\big\}$ of $[0,1]$. Clearly, we have $(\mathbf{x})_i=(\mathbf{x}_k)_i=x_i$ for $i\neq k$, and $(\mathbf{x})_k=x_k\in [0,x_k]$, $(\mathbf{x}_k)_k=0\in [0,x_k]$.
Due to the compatibility of $A$ we obtain $A(\mathbf{x}), A(\mathbf{x}_k)\in ]x_k,1]$ i.e., $\alpha>x_k$. Further, consider the partition $\big\{ [0,x_k],\{x\}\mid x\in ]x_k,1]\big\}$. Again, due to the compatibility of $A$, it holds $A(\mathbf{x}),A(\mathbf{x}_k)\in \{x\}$ for some $x\in ]x_k,1]$, i.e., $\delta=\alpha$. Hence, $x_k<\alpha\leq \beta$, and $$A(\mathbf{x})=\delta=\mathrm{med}(x_k,\alpha,\beta)=\mathrm{med}\big(x_k,A(\mathbf{x}_k),A(\mathbf{x}^k)\big).$$

Using a similar reasoning, we obtain in the case $x_k >\delta$ the equality $\delta=\beta$, inequalities $\alpha\leq\beta < x_k$ and thus $A(\mathbf{x})=\mathrm{med}(x_k,\alpha,\beta)=\mathrm{med}\big(x_k,A(\mathbf{x}_k),A(\mathbf{x}^k)\big)$.

Finally, if $x_k=\delta$, due to the above mentioned fact $\alpha\leq\delta\leq\beta$ we obtain $A(\mathbf{x})=\mathrm{med}\big(x_k,A(\mathbf{x}_k),A(\mathbf{x}^k)\big)$ as well. Summarizing all three discussed cases, we have shown the median decomposability of $A$. As $A(\mathbf{0})=0$ and $A(\mathbf{1})=1$, due to \cite{CM2009LINZ} it follows that $A$ is a Sugeno integral, $A=\mathsf{Su}_m$ for some capacity $m$.
\medskip

Conversely, suppose $A=\mathsf{Su}_m$ for some capacity $m\colon 2^N\to [0,1]$. Let $C$ be an arbitrary congruence on $([0,1],\wedge,\vee)$. Consider $\mathbf{x},\mathbf{y}\in [0,1]^n$ such that $(x_i,y_i)\in C$ for $i=1,\dots,n$. The compatibility of the meet $\wedge$ (for any arity) ensures that $\big(m(I)\wedge(\bigwedge_{i\in I}x_i),m(I)\wedge(\bigwedge_{i\in I}y_i)\big)\in C$ for any subset $\emptyset\neq I\subseteq N$ (recall that trivially $\big(m(I),m(I)\big)\in C$ holds).

Similarly, the join $\vee$ is compatible for any arity, and thus 
$$ \big( \bigvee_{\emptyset\neq I\subseteq N}\big( m(I)\wedge (\bigwedge_{i\in I}x_i)\big), \bigvee_{\emptyset\neq I\subseteq N}\big( m(I)\wedge (\bigwedge_{i\in I}y_i)\big)\big)\in C,$$
i.e., based on formula \eqref{eq2}, $\big( \mathsf{Su}_m(\mathbf{x}),\mathsf{Su}_m(\mathbf{y})\big)\in C$, proving the compatibility of $\mathsf{Su}_m$ with respect to $C$. Due to the arbitrariness of the choice of the congruence $C$, the compatibility of $\mathsf{Su}_m$ follows.
\qed
\end{proof}

Observe that Theorem \ref{thm1} can be straightforwardly formulated and validated for any bounded chain $(L,\wedge,\vee)$, i.e., an aggregation function $A\colon L^n\to L$ is compatible if and only if $A=\mathsf{Su}_m$ for some $L$-valued capacity $m\colon 2^N\to L$.

\section{Scale invariant aggregation functions}\label{sec:3}

Consider a bounded chain $L$ as a scale for the score in multicriteria decision linked to $n$ criteria. Then the normed utility function $A\colon L^n\to L$ can be seen as an aggregation function on $L$. For a bounded chain $L_1$, the epimorphism $\varphi\colon L\to L_1$ is a surjective homomorphism, i.e., $L_1=\{\varphi(x)\mid x\in L\}$, and for any $x,y\in L$, $x\leq_{L} y$, it holds $\varphi(x)\leq_{L_1}\varphi(y)$. 
Moreover, $\{\varphi^{-1}(\{a\})\mid a\in L_1\}$ is an interval partition of $L$. Obviously, if $\varphi\colon L\to L_1$ is an epimorphism then $\mathrm{card}(L)\geq\mathrm{card}(L_1)$.
Let us note, that it can be easily verified that in this case $\varphi$ is a lattice homomorphism, i.e., it preserves the lattice operations $\wedge$ and $\vee$.

\begin{example}\label{ex1}
As a particular example consider the real unit interval $L=[0,1]$.
\begin{itemize}
\item[(i)] The decimal half up rounding is related to the scale $L_1=\{0,0.1,\dots,0.9,1\}$, the corresponding epimorphism $\varphi\colon L\to L_1$ is given by the interval partition $\mathcal{P}=\big\{ \varphi^{-1}(0),\varphi^{-1}(0.1),\dots,\varphi^{-1}(1)\big\}$\\ $=\big\{[0,0.05[,\; [0.05,0.15[,\; \dots,\; [0.85,0.95[,\; [0.95,1] \big\}$.

\item[(ii)] Similarly, the centesimal half up rounding is related to \\ $L_1=\{0,0.01,\dots,0.99,1\}$ and \\ $\mathcal{P}=\big\{ [0,0.005[,\; [0.005,0.015[,\;\dots,\; [0.985,0.995[,\; [0.995,1]\big\}$.

\item[(iii)] For the linguistic scale $L_1=\{\mbox{bad},\mbox{ medium},\mbox{ good},\mbox{ excellent}\}$ one can consider, e.g., $\mathcal{P}=\big\{ [0,0.3],\;]0.3,0.7[,\;[0.7,0.9[,\;[0.9,1]\big\}$.
\end{itemize}
Having an aggregation function $A$ on $L$, its desirable property is the compatibility with the above mentioned epimorphisms.
\end{example}

\begin{definition}\label{def3}
Let $(L,\leq_L)$ be a bounded chain, and let $A\colon L^n\to L$ be an aggregation function on $L$. Then $A$ is called scale invariant whenever for any bounded chain $(L_1,\leq_{L_1})$ and epimorphism $\varphi\colon L\to L_1$ there is an aggregation function $B\colon {L_1}^n\to L_1$ such that for each $\mathbf{x}\in L^n$ it holds
\begin{equation}\label{eq4}
\varphi\big(A(\mathbf{x})\big)=B\big(\varphi(\mathbf{x}),\dots,\varphi(\mathbf{x})\big).
\end{equation}
\end{definition}

The next characterization of scale invariant aggregation functions on $L$ follows from Theorem \ref{thm1} (adapted for bounded chains).

\begin{theorem}\label{thm2}
Let $(L,\leq_L)$ be a bounded chain, and let $A\colon L^n\to L$ be an aggregation function on $L$. Then $A$ is scale invariant if and only if $A=\mathsf{Su}_m$, where the $L$-valued capacity $m\colon 2^N\to L$ is given by $m(I)=A(1^L_I)$, where $1^L_I(i)=1_L$ if $i\in I$ and $1^L_I(i)=0_L$ otherwise.
\end{theorem}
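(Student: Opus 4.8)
The plan is to connect scale invariance (Definition~\ref{def3}) with compatibility (as characterized in Theorem~\ref{thm1}, adapted to bounded chains) and then invoke that theorem. The key observation is that an epimorphism $\varphi\colon L\to L_1$ and a congruence on $L$ are essentially the same datum: by the bounded-chain analogue of Proposition~\ref{prop1}, the congruence associated with $\varphi$ is exactly $C_\varphi=\bigcup_{a\in L_1}\big(\varphi^{-1}(\{a\})\big)^2$, whose classes are the interval blocks $\varphi^{-1}(\{a\})$. So two elements $x,y\in L$ are $C_\varphi$-related precisely when $\varphi(x)=\varphi(y)$. I would make this correspondence explicit at the outset, since it is what lets me translate between the two notions.

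First I would prove the easier direction, that a Sugeno integral is scale invariant. Assuming $A=\mathsf{Su}_m$ with $m(I)=A(1^L_I)$, I would fix an epimorphism $\varphi\colon L\to L_1$ and simply push $\varphi$ through the defining formula \eqref{eq2}. Since $\varphi$ is a lattice homomorphism (stated in the text preceding Example~\ref{ex1}), it commutes with the finite meets and joins appearing in \eqref{eq2}, so $\varphi\big(\mathsf{Su}_m(\mathbf{x})\big)=\bigvee_{\emptyset\neq I\subseteq N}\big(\varphi(m(I))\wedge\bigwedge_{i\in I}\varphi(x_i)\big)$. Setting $m_1(I)=\varphi(m(I))=\varphi(A(1^L_I))$ and defining $B=\mathsf{Su}_{m_1}$ on $L_1^n$ then gives $\varphi\big(A(\mathbf{x})\big)=B\big(\varphi(x_1),\dots,\varphi(x_n)\big)$, which is \eqref{eq4}; I should also check $m_1$ is a genuine $L_1$-valued capacity, which is immediate from $\varphi$ being an order-preserving surjection with $\varphi(0_L)=0_{L_1}$ and $\varphi(1_L)=1_{L_1}$.

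For the converse I would show that scale invariance forces compatibility, and then appeal to Theorem~\ref{thm1} (bounded-chain version). Suppose $A$ is scale invariant and let $C$ be any congruence on $L$ with $(x_i,y_i)\in C$ for all $i$. Using the Proposition~\ref{prop1} analogue I would build the quotient chain $L_1=L/C$ with its natural epimorphism $\varphi$, so that $(u,v)\in C$ iff $\varphi(u)=\varphi(v)$. Since $(x_i,y_i)\in C$ means $\varphi(x_i)=\varphi(y_i)$ for every $i$, the existence of $B$ satisfying \eqref{eq4} yields $\varphi\big(A(\mathbf{x})\big)=B\big(\varphi(\mathbf{x})\big)=B\big(\varphi(\mathbf{y})\big)=\varphi\big(A(\mathbf{y})\big)$, hence $\big(A(\mathbf{x}),A(\mathbf{y})\big)\in C$. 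As $C$ was arbitrary, $A$ is compatible, and Theorem~\ref{thm1} gives $A=\mathsf{Su}_m$; the identification $m(I)=A(1^L_I)$ is the standard one recorded in item~ii) of the axiomatization list. I would finally note that the quotient construction needs $L/C$ to again be a bounded chain, which holds because the blocks of $C$ are intervals and a homomorphic image of a chain is a chain.

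The main obstacle I expect is the passage to the quotient chain in the converse direction: I must verify that $L/C$ really is a bounded chain (so that Definition~\ref{def3} applies to the epimorphism $\varphi\colon L\to L/C$) and, in full generality, that every congruence arises from some epimorphism. For general $L$ one should confirm that the Proposition~\ref{prop1} correspondence between congruences and interval partitions carries over, and that the induced order on blocks is total; this is where care is needed, though it is a routine consequence of the interval structure of the congruence classes on a chain. The two directions are otherwise short, the forward one being a one-line computation with $\varphi$ and the formula \eqref{eq2}.
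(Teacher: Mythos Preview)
Your proposal is correct and follows essentially the same route as the paper: in one direction you push $\varphi$ through formula~\eqref{eq2} to obtain $B=\mathsf{Su}_{\varphi\circ m}$, and in the other you realize an arbitrary congruence $C$ as the kernel of the quotient epimorphism $\varphi\colon L\to L/C$ and then read off compatibility from~\eqref{eq4}, invoking Theorem~\ref{thm1}. The only cosmetic difference is that the paper describes $L_1$ concretely as the set of interval blocks of $C$ with the induced order, rather than using quotient notation, but this is the same construction.
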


\begin{proof}
To see the necessity, it is enough for any epimorphism $\varphi\colon L\to L_1$, to put $B\colon {L_1}^n\to L_1$, $B=\mathsf{Su}_{\varphi(m)}$, where the $L_1$-valued capacity $\varphi(m)\colon 2^N\to L_1$ is given by $\varphi\big(m\big)(I)=\varphi\big(m(I)\big)$. Since $\varphi$ preserves the lattice operations, for each $\mathbf{x}=(x_1,\dots,x_n)\in L^n$ we obtain
$$ \varphi\big(\bigvee_{\emptyset\neq I\subseteq N}\big( m(I)\wedge (\bigwedge_{i\in I}x_i)\big)\big)=\bigvee_{\emptyset\neq I\subseteq N}\big(\varphi\big(m(I)\big)\wedge \big(\bigwedge_{i\in I}\varphi(x_i)\big)\big),$$
which yields $\varphi\big(\mathsf{Su}_m(\mathbf{x})\big)=\mathsf{Su}_{\varphi(m)}\big(\varphi(x_1),\dots,\varphi(x_n)\big)$.
\medskip

Suppose now that $A$ is scale invariant. For any congruence $C$ linked to an interval partition $\mathcal{P}$ of $L$, put $L_1=\mathcal{P}$, and $\leq_{L_1}$ given by $P_1\leq_{L_1} P_2$, $P_1,P_2\in \mathcal{P}$, whenever there are $x\in P_1$ and $y\in P_2$ such that $x\leq_L y$. Obviously, $\varphi\colon L\to L_1$ given by $\varphi(x)=P\in\mathcal{P}$, $x\in P$, is an epimorphism linked to the partition $\mathcal{P}$. Due to \eqref{eq4}, there is an aggregation function $B\colon {L_1}^n\to L_1$ such that, for each $\mathbf{x}\in L^n$, $\varphi\big(A(\mathbf{x})\big)=B\big(\varphi(\mathbf{x}),\dots,\varphi(\mathbf{x})\big)$.

However, then for any $\mathbf{x},\mathbf{y}\in L^n$ such that $(x_1,y_1),\dots,(x_n,y_n)\in C$ it holds $\varphi(x_1)=\varphi(y_1),\dots,\varphi(x_n)=\varphi(y_n) $ and thus 
$$ \varphi\big(A(\mathbf{x})\big)=B\big(\varphi(x_1),\dots,\varphi(x_n)\big)=B\big(\varphi(y_1),\dots,\varphi(y_n)\big)=\varphi\big(A(\mathbf{y})\big),$$
yielding $\big(A(\mathbf{x}),A(\mathbf{y})\big)\in C$. Hence, $A$ is a compatible aggregation function on $L$, i.e., $A=\mathsf{Su}_m$ for an $L$-valued capacity $m$.
\qed
\end{proof}

Observe that for scale invariant aggregation function $A=\mathsf{Su}_m\colon L^n\to L$, and any epimorphism $\varphi\colon L\to L_1$, the corresponding aggregation function $B\colon {L_1}^n\to L_1$ linked to $A$ by formula \eqref{eq4} is also a Sugeno integral, $B=\mathsf{Su}_{\varphi(m)}$.

\begin{example}\label{ex2}
Continuing in Example \ref{ex1}, consider $n=3$ and a capacity $m$ given by $m(I)=\frac{\mathrm{card}(I)}{3}$. For $\mathbf{x}=(0.54,\frac{1}{\sqrt{2}},\frac{3}{7})=(x_1,x_2,x_3)$ it holds $x_3<x_1<x_2$ and $\mathsf{Su}_m(\mathbf{x})=(x_3\wedge 1)\vee (x_1\wedge \frac{2}{3})\vee (x_2\wedge \frac{1}{3})=0.54$ by formula \eqref{eq3}.
\begin{itemize}
\item[(i)] In the case of decimal half up rounding, it holds 
$$\varphi\big(m(I)\big)=\begin{cases}0 &\mbox{ if }I=\emptyset, \\
                                     0.3 &\mbox{ if }\mathrm{card}(I)=1,\\  
                                     0.7 &\mbox{ if }\mathrm{card}(I)=2, \\
                                     1 &\mbox{ if }\mathrm{card}(I)=3,\end{cases}$$
and $\varphi(\mathbf{x})=(0.5,0.7,0.4)$. Then 
$$ \mathsf{Su}_{\varphi(m)}\big(\varphi(\mathbf{x})\big)=(0.4\wedge 1)\vee(0.5\wedge 0.7)\vee(0.7\wedge 0.3)=0.5=\varphi(0.54);$$                                                              

\item[(ii)] In the case of centesimal half up rounding,   
$$\varphi\big(m(I)\big)=\begin{cases}0 &\mbox{ if }I=\emptyset, \\
                                     0.33 &\mbox{ if }\mathrm{card}(I)=1,\\  
                                     0.67 &\mbox{ if }\mathrm{card}(I)=2, \\
                                     1 &\mbox{ if }\mathrm{card}(I)=3,\end{cases}$$  
and $\varphi(\mathbf{x})=(0.54,0.71,0.43)$. Then
$$ \mathsf{Su}_{\varphi(m)}\big(\varphi(\mathbf{x})\big)=(0.43\wedge 1)\vee(0.54\wedge 0.67)\vee(0.71\wedge 0.33)=0.54=\varphi(0.54);$$                                                                
\item[(iii)] In the case of linguistic scale $L_1$ introduced in Example \ref{ex1} (iii),  
$$\varphi\big(m(I)\big)=\begin{cases}\mbox{bad} &\mbox{ if }I=\emptyset, \\
                                     \mbox{medium} &\mbox{ if }\mathrm{card}(I)\in\{1,2\},\\  
                                     \mbox{excellent} &\mbox{ if }\mathrm{card}(I)=3, \end{cases}$$
and $\varphi(\mathbf{x})=(\mbox{medium},\,\mbox{good},\, \mbox{medium})$. Then
$$ \mathsf{Su}_{\varphi(m)}\big(\varphi(\mathbf{x})\big)=(\mbox{medium} \wedge \mbox{excellent})\vee (\mbox{medium} \wedge \mbox{medium} )\vee $$
$$(\mbox{good} \wedge \mbox{medium})=\mbox{medium} =\varphi(0.54).$$
\end{itemize}

\end{example}
\section{Concluding remarks}

We have completely characterized compatible aggregation functions acting on an (arbitrary) bounded chain $L$, and in particular on $[0,1]$. These functions coincide with Sugeno integrals with respect to ($L$-valued) capacities, and thus the compatibility is a new characteristic property of Sugeno integrals. Moreover, we have shown that the scale invariant normed utility functions are just Sugeno integrals, and thus the importance of Sugeno integrals acting as utility functions on ordinal scales was stressed.

\section*{Acknowledgements}
The first author was supported by the international project Austrian Science Fund (FWF)-Grant Agency of the Czech Republic (GA\v{C}R) number I 1923-N25; the second author by the Slovak Research and Development Agency under contract APVV-0013-14 and by the European Regional Development Fund in the IT4Innovations Centre of Excellence project reg. no. CZ.1.05/1.1.00/02.0070; the third author by the ESF Fund CZ.1.07/2.3.00/30.0041 and by the Slovak VEGA Grant 2/0028/13.

\end{document}